\newif\ifHideFoot
\newcommand{\Yano}[1]{}
\newcommand{\Jeff}[1]{}
\newcommand{\Charles}[1]{}
\newcommand{\marg}[1]{\normalsize{{
			\color{red}\footnote{{\color{blue}#1}}}{\marginpar[\vskip
			-.25cm{\color{red}\hfill\tiny\thefootnote$\rightarrow$}]{\vskip
				-.2cm{\color{red}$\leftarrow$\tiny\thefootnote}}}}}
\newcommand{\Yano}[1]{\marg{(Yano) #1}}
\newcommand{\Jeff}[1]{\marg{(Jeff) #1}}
\newcommand{\Charles}[1]{\marg{(Charles) #1}}
\newtheorem{teo}{Theorem}[section]
\newtheorem{pro}[teo]{Proposition}
\newtheorem{cor}[teo]{Corollary}
\newtheorem{teoalpha}{Theorem}
\newtheorem*{teoalpha*}{Theorem}
\newtheorem*{coralpha*}{Corollary}
\newtheorem*{conalpha*}{Conjecture}
\newenvironment{alphabetize}{\begin{enumerate}

}{\end{enumerate}}
\theoremstyle{definition}
\theoremstyle{remark}
\newtheorem{rem}[teo]{Remark}
\DeclareMathOperator{\coniveau}{N}
\def\cx{{\mathbb C}}
\def\rat{{\mathbb Q}}
\def\integ{{\mathbb Z}}
\def\iso{\simeq}
\renewcommand{\bar}[1]{{\overline{#1}}}
\DeclareMathOperator{\alb}{Alb}
\DeclareMathOperator{\gal}{Gal}
\DeclareMathOperator{\pic}{Pic}
\DeclareMathOperator{\Ab}{Ab}
\DeclareMathOperator{\A}{A}
\def\red{{\rm red}}
\def\kbar{{\bar K}}
\title{Derived equivalent threefolds, algebraic representatives, and the coniveau filtration
}
\author{Jeffrey D. Achter}
\address{Colorado State University, Department of Mathematics,
	Fort Collins, CO 80523,
	USA}
\email{j.achter@colostate.edu}
\author{Sebastian Casalaina-Martin }
\address{University of Colorado, Department of Mathematics, 
	Boulder, CO 80309, USA }
\email{casa@math.colorado.edu}
\author{Charles Vial}
\address{Universit\"at Bielefeld, Germany}
\email{vial@math.uni-bielefeld.de}
\thanks{The first author was partially supported by  grants from the
	NSA (H98230-14-1-0161 and
	H98230-15-1-0247). 
	The second  author was partially supported by  grants from the
	Simons Foundation (317572) and the NSA (H98230-16-1-0053). 
	The third author was supported  by EPSRC Early Career Fellowship
	EP/K005545/1.}
\date{\today}
\begin{document}
	
	\maketitle

\begin{abstract}
	A conjecture of Orlov predicts that derived equivalent smooth
	projective varieties over a field have isomorphic Chow motives.
	The conjecture is known for curves, and was recently observed for
	surfaces by Huybrechts.  In this paper we focus on threefolds over
	perfect fields, and
	unconditionally secure results, which are implied by Orlov's
	conjecture, concerning  the geometric coniveau filtration, and abelian varieties attached to smooth projective varieties.
\end{abstract}

\section*{Introduction}
	
A fundamental invariant of a smooth projective variety $X$  is given by $D^b(X)$, the bounded derived category of coherent sheaves on $X$\,; see \cite{huybrechtsFM} for a survey. Two smooth projective varieties over a field $K$ are said to be \emph{derived equivalent} if there exists a $K$-linear exact 
equivalence of categories between their bounded derived categories of coherent sheaves. The following conjecture states that $D^b(X)$ is a finer invariant than the Chow motive of $X$ with rational coefficients.

\begin{conalpha*}[Orlov \cite{orlovmotives}]
 If two smooth projective varieties $X$ and $Y$ defined over a field $K$ are derived equivalent, then the Chow motives of $X$ and $Y$ with $\rat$-coefficients are isomorphic.
\end{conalpha*}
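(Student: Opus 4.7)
The plan is to transport the Fourier--Mukai formalism from the derived category down to Chow groups and show that the resulting cycle-theoretic correspondences are mutually inverse isomorphisms of motives.

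First, I would invoke Orlov's representability theorem to write any $K$-linear exact equivalence $\Phi\colon D^b(X)\xrightarrow{\sim}D^b(Y)$ as a Fourier--Mukai transform $\Phi_P$ with kernel $P\in D^b(X\times Y)$, and its quasi-inverse as $\Phi_Q$ for a kernel $Q\in D^b(Y\times X)$ satisfying $Q\ast P\cong \calo_{\Delta_X}$ and $P\ast Q\cong \calo_{\Delta_Y}$. To these kernels I attach the Mukai vectors $v(P)=\operatorname{ch}(P)\sqrt{\operatorname{td}(X\times Y)}\in\chow^*(X\times Y)_\rat$ and $v(Q)\in\chow^*(Y\times X)_\rat$. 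Grothendieck--Riemann--Roch, applied to the composition formula for kernels, translates the relations $Q\ast P\cong \calo_{\Delta_X}$ and $P\ast Q\cong \calo_{\Delta_Y}$ into the identities $v(Q)\circ v(P)=[\Delta_X]$ and $v(P)\circ v(Q)=[\Delta_Y]$ in the rational Chow groups of the relevant self-products.

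Second, these identities already give an isomorphism of \emph{ungraded} rational Chow motives, i.e.\ they identify $\bigoplus_i h(X)(i)$ with $\bigoplus_i h(Y)(i)$. The real content of the conjecture is to upgrade this to an isomorphism of \emph{graded} Chow motives, i.e.\ to produce a correspondence in codimension exactly $\dim X$ (a degree-zero morphism $h(X)\to h(Y)$) that is an isomorphism. The naive candidate $v(P)_{\dim X}$ fails: in the convolution $v(Q)\circ v(P)$ the off-diagonal components in different codimensions mix nontrivially, so $v(P)_{\dim X}$ is not generally an inverse of $v(Q)_{\dim Y}$.

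Third, and this is the crux, I would attempt to decompose both sides via a Chow--K\"unneth decomposition $h(X)=\bigoplus_i h_i(X)$ and $h(Y)=\bigoplus_j h_j(Y)$, and then to show, using the composition formulas, that the appropriately graded piece of $v(P)$ induces an isomorphism $h_i(X)\xrightarrow{\sim} h_i(Y)$ for each $i$. Reducing modulo homological equivalence one does obtain such cohomological isomorphisms from the Mukai vector, so the task is to lift the cohomological decomposition to a motivic one that is simultaneously compatible with $v(P)$ on both factors.

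The main obstacle is precisely this last step: Murre's conjecture on the existence of a Chow--K\"unneth decomposition is open in general, and even in the cases where such a decomposition is known it is not canonical, so matching decompositions across $X$ and $Y$ via $v(P)$ is delicate. Any unconditional proof of Orlov's conjecture beyond curves and surfaces would seem to require either a new construction of Chow--K\"unneth projectors from a Fourier--Mukai kernel, or an entirely different strategy that bypasses the graded lift — which is exactly why the present paper restricts to extracting those consequences of the conjecture (invariance of the geometric coniveau filtration and of associated abelian varieties) that can be established from the cohomological shadow of $v(P)$ alone.
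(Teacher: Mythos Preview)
The statement you are attempting to prove is labeled \emph{Conjecture} in the paper, and the paper does not supply a proof of it; Orlov's conjecture remains open in general. So there is no ``paper's own proof'' to compare against. What the paper actually \emph{uses} is precisely the content of your first paragraph: the Mukai vectors $v(\mathcal E)$ and $v(\mathcal F)$ satisfy $v(\mathcal F)\circ v(\mathcal E)=\Delta_X$ and $v(\mathcal E)\circ v(\mathcal F)=\Delta_Y$ in rational Chow groups, and this yields the ungraded isomorphisms \eqref{eq:b} and, after invoking functoriality of coniveau, the isomorphisms \eqref{eq:n}. Your derivation of that part is correct and matches the paper's argument in the proof of Proposition~\ref{P:even-odd}.

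Your diagnosis of the gap is also accurate. The ungraded Mukai vector only gives an isomorphism $\bigoplus_i h(X)(i)\simeq\bigoplus_i h(Y)(i)$ of Chow motives, and promoting this to a graded isomorphism $h(X)\simeq h(Y)$ is exactly the open content of the conjecture. Your proposed route through Chow--K\"unneth decompositions is the natural one, and you correctly identify that it is obstructed by Murre's conjecture (and, even where Chow--K\"unneth projectors exist, by their non-canonicity). So what you have written is not a proof but an honest outline of why the conjecture is hard; that is the appropriate conclusion, and it is consistent with the paper's strategy of extracting only those consequences---coniveau invariance and isogeny of algebraic representatives---that follow from the ungraded identities without resolving the graded lift.
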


The conjecture is true for varieties $X$ and $Y$ with ample canonical or anti-canonical bundle, since then a fundamental theorem of Bondal--Orlov \cite{bondalorlov01} states that $X$ and $Y$ are isomorphic.  The conjecture is also well known for curves, 
 and  it has recently been observed by Huybrechts that the conjecture is true for surfaces \cite{huybrechts17}. 
In this paper we consider 
some consequences of the conjecture for threefolds. 

 As motivation, recall that 
a direct consequence of Orlov's conjecture is the following weaker
conjecture\,: \emph{if $X$ and $Y$ are smooth projective varieties
defined over a subfield $K\subseteq \cx$ that are derived equivalent,
then the Hodge structures $H^i(X_{\cx},\rat)$ and $H^i(Y_{\cx},\rat)$
are isomorphic for all $i$}. The case $i=0$ is true and obvious. The
first non-trivial result in this direction is due to Popa and Schnell,
building on work of Rouquier \cite{rouquier11}, who secure the case
$i=1$ with $K=\mathbb C$.
In fact, together with Honigs, we have established that their result is valid over an arbitrary field \cite{honigs3foldA}\,:

\begin{teoalpha}[Popa--Schnell  \cite{popaschnell}]
\label{T:ps}
 Assume that $X$ and $Y$ are two derived equivalent smooth projective
 varieties over a field $K$. Then the abelian varieties
 $\operatorname{Pic}^0(X)_{{\rm red}}$ and
 $\operatorname{Pic}^0(Y)_{{\rm red}}$ are isogenous. 
\end{teoalpha}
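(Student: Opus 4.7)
The approach is the one of Popa--Schnell, adapted to work over an arbitrary base field $K$. The key input is Rouquier's theorem, which identifies the identity component of the group scheme of $K$-linear autoequivalences of $D^b(X)$ with $\operatorname{Aut}^0(X) \times \operatorname{Pic}^0(X)$. A derived equivalence $\Phi \colon D^b(X) \xrightarrow{\sim} D^b(Y)$ induces, by conjugation, an isomorphism of these autoequivalence group schemes, and hence an isomorphism of $K$-group schemes
\begin{equation*}
\operatorname{Aut}^0(X) \times \operatorname{Pic}^0(X) \;\cong\; \operatorname{Aut}^0(Y) \times \operatorname{Pic}^0(Y).
\end{equation*}
First I would verify that Rouquier's isomorphism holds $K$-rationally in the generality needed (arbitrary field, arbitrary characteristic); this proceeds via Fourier--Mukai kernel formalism, combined with Galois descent from $\bar K$ where necessary.

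Next, I would extract the abelian variety parts. Passing to reduced connected subgroup schemes (after base change to the perfect closure of $K$ and descent, if needed) and invoking the Chevalley--Rosenlicht structure theorem, I would take the maximal abelian-variety quotient on both sides. Since $\operatorname{Pic}^0(-)_{\mathrm{red}}$ is itself an abelian variety --- the Picard scheme of a smooth projective variety being proper --- this yields an isogeny of abelian varieties over $K$:
\begin{equation*}
A_X \times \operatorname{Pic}^0(X)_{\mathrm{red}} \;\sim\; A_Y \times \operatorname{Pic}^0(Y)_{\mathrm{red}},
\end{equation*}
where $A_X$ (resp.\ $A_Y$) is the maximal abelian variety quotient of $\operatorname{Aut}^0(X)_{\mathrm{red}}$ (resp.\ $\operatorname{Aut}^0(Y)_{\mathrm{red}}$). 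To cancel $A_X$ and $A_Y$, I would invoke the Nishi--Matsumura theorem (extended to positive characteristic by Brion), which realises $A_X$ as an abelian subvariety of $\operatorname{Alb}(X)$. Combined with Albanese--Picard duality $\operatorname{Alb}(X) \cong \operatorname{Pic}^0(X)_{\mathrm{red}}^\vee$, each simple isogeny factor of $A_X$ must appear in $\operatorname{Pic}^0(X)_{\mathrm{red}}^\vee$, with an analogous constraint for $Y$. A multiplicity count of simple isogeny factors on both sides of the displayed isogeny, using Poincar\'e complete reducibility, should then yield the desired conclusion $\operatorname{Pic}^0(X)_{\mathrm{red}} \sim \operatorname{Pic}^0(Y)_{\mathrm{red}}$.

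The main obstacle will be this final multiplicity-count step: the simple factors of $A_X$ and $A_Y$ need not match each other a priori, and the dualization relating $\operatorname{Alb}$ to $\operatorname{Pic}^0_{\mathrm{red}}$ permutes isogeny classes of simple abelian varieties in a generally non-trivial way, so the bookkeeping requires care. A secondary, more foundational obstacle is confirming that Rouquier's isomorphism can be set up $K$-rationally in positive characteristic and over non-algebraically-closed fields; this is essential to obtain an isogeny over $K$ --- not merely over $\bar K$ --- in the conclusion.
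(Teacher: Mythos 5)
First, a point of orientation: the paper does not actually prove this statement --- it is quoted from Popa--Schnell, with the extension from $\mathbb{C}$ to arbitrary fields deferred to the authors' joint work with Honigs --- so I am comparing your proposal with the argument in those references. Your opening moves coincide with theirs: the Rouquier isomorphism $F\colon \operatorname{Aut}^0(X)\times\operatorname{Pic}^0(X)\xrightarrow{\sim}\operatorname{Aut}^0(Y)\times\operatorname{Pic}^0(Y)$, the need to establish its $K$-rationality (which is indeed the main content of the extension beyond $\mathbb{C}$), and the reduction to a statement about the abelian parts of these group schemes.

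The gap is exactly where you suspect it, and it is fatal to the route you propose: the multiplicity count cannot be completed. Note first that your worry about duality permuting isogeny classes is a red herring --- every abelian variety carries a polarization, hence is isogenous to its dual, so for a simple $S$ one has $m_S(P^\vee)=m_S(P)$, where $m_S$ denotes the multiplicity in the isogeny decomposition. The information available is then: $m_S(A_X)+m_S(P_X)=m_S(A_Y)+m_S(P_Y)$ for every simple $S$, together with the Nishi--Matsumura/Brion bounds $m_S(A_X)\le m_S(P_X)$ and $m_S(A_Y)\le m_S(P_Y)$. These constraints are satisfied by, for instance, $(m_S(A_X),m_S(P_X),m_S(A_Y),m_S(P_Y))=(0,2,1,1)$, so they do not force $m_S(P_X)=m_S(P_Y)$; no bookkeeping with these inequalities alone can close the argument. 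What Popa--Schnell actually do is analyze the specific homomorphism $F$ rather than the abstract isogeny classes of the two products: writing $F(\operatorname{id},\alpha)=(\beta(\alpha),\gamma(\alpha))$, their key lemma (proved by examining how the pair $(\beta(\alpha),\gamma(\alpha))$ acts on the Fourier--Mukai kernel) shows that the image $B=\beta(\operatorname{Pic}^0(X))\subseteq\operatorname{Aut}^0(Y)$ is an abelian variety acting on $Y$ in such a way that a copy of its dual sits inside $\operatorname{Pic}^0(Y)$ and meets $\gamma\bigl((\ker\beta)^0\bigr)$ in a finite subgroup; this yields $\dim\operatorname{Pic}^0(X)\le\dim\operatorname{Pic}^0(Y)$, and the isogeny follows by symmetry. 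So the missing ingredient is a statement about how $\operatorname{Pic}^0(X)$ sits inside the Rouquier group and interacts with the kernel of the equivalence, not a finer count of simple isogeny factors.
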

In particular, for all $\ell\ne
\operatorname{char}(K)$, the  $\operatorname{Gal}(K)$-representations
$H^1(X_{\bar K},\rat_\ell)$ and $H^1(Y_{\bar K},\rat_\ell)$ are
isomorphic.  In this paper we extend these results to other
Galois representations and abelian varieties attached to smooth
projective threefolds.

First, we observe that for any $i$  the $\ell$-adic \'etale
cohomology groups $H^i(-_{\bar K},\rat_\ell)$ of two derived equivalent
threefolds over an arbitrary field $K$ (of characteristic $\neq \ell$)
are isomorphic as $\operatorname{Gal}(K)$-modules (Proposition \ref{P:coniveau}).  Moreover, when $K$ is a perfect field, we show
 that these isomorphisms can be chosen to be compatible
with the geometric coniveau filtration (as reviewed in \S \ref{S:coniveau}), and
 we show that they share a common motivic
invariant, namely, the isogeny classes of their \emph{algebraic
  representatives} $\Ab^i(-)$. The algebraic representative
$\Ab^i(X)$ is an abelian variety over $K$ which is universal for
algebraically trivial cycles on $X$ of codimension $i$\,; see \S
\ref{S:algrep} for details.
 More precisely, our main result is\,: 
 
 \begin{teoalpha}\label{T:main}
 	Let $X$ and $Y$ be two smooth projective varieties of
        dimension $3$ over a perfect field $K$. Assume that $X$ and
        $Y$ are derived equivalent. 

\begin{alphabetize}

\item\label{galoisisom} For each nonnegative integer $i$, the Galois
  representations $H^i(X_{\bar K},\rat_\ell)$ and $H^i(Y_{\bar
  K},\rat_\ell)$ are isomorphic.  If $K\subseteq \mathbb C$, then the rational Hodge structures $H^i(X_{\mathbb C},\mathbb Q)$ and $H^i(Y_{\mathbb C},\mathbb Q)$ are isomorphic.  
  
\item If $K\subseteq \cx$, then there exist isomorphisms in
  (\ref{galoisisom}) compatible with the geometric  coniveau filtration (for both the $\ell$-adic and Betti cohomology).

\item\label{algrepisog} For each nonnegative integer $i$, the algebraic representatives $\Ab^i(X)$ and $\Ab^i(Y)$ are
  isogenous over $K$.
\end{alphabetize}

\end{teoalpha}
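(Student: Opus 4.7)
The plan is to use the Fourier--Mukai kernel $P \in D^b(X \times_K Y)$ representing the $K$-linear equivalence (by Orlov's theorem combined with descent of kernels for $K$-linear equivalences) and to work with its Mukai vector $v(P) := \mathrm{ch}(P) \cdot \sqrt{\mathrm{td}(X \times Y)} \in \chow^*(X \times_K Y)_{\rat}$. This is a $K$-rational algebraic correspondence whose $\ell$-adic (respectively Betti) realization gives a Galois-equivariant (respectively Hodge-equivariant) isomorphism on total cohomology, the quasi-inverse being realized by the Mukai vector of an inverse kernel.

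For part~(\ref{galoisisom}), the component $v_k \in \chow^k(X \times Y)_{\rat}$ shifts cohomological degree by $2k - 6$, so $v(P)$ preserves the parity of degree and induces Galois isomorphisms of the even- and odd-degree total cohomology separately. Theorem~\ref{T:ps} handles $H^1$, Poincar\'e duality promotes this to $H^5$, $H^0$ and $H^6$ are matched trivially, and Hochschild--Kostant--Rosenberg applied to the derived-equivalence isomorphism $HH_*(X) \cong HH_*(Y)$ forces all Hodge and hence Betti numbers to agree. To upgrade numerical agreement to a Galois isomorphism in each middle degree $i \in \{2,3,4\}$, I would sandwich $v(P)$ between the projectors of a Chow--K\"unneth decomposition of $X$ and $Y$ (available for smooth projective threefolds over perfect fields) to extract its degree-preserving diagonal components; the relation $v(P^\vee) \cdot v(P) = \Delta_X$, combined with the isomorphisms already known in low and high degrees, then forces these diagonal components to be isomorphisms. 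Part~(b) follows from the same construction: the isomorphisms are induced by $K$-rational algebraic correspondences, which respect the geometric coniveau filtration with the natural degree shift, so coniveau compatibility is automatic, both $\ell$-adically and in Betti cohomology.

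For part~(\ref{algrepisog}), earlier work of the authors furnishes algebraic representatives $\Ab^i$ over perfect fields in any characteristic, together with the functoriality needed to act by algebraic correspondences. The diagonal component of $v(P)$ in cohomological degree $2i-1$ therefore lifts to a homomorphism $\Ab^i(X) \to \Ab^i(Y)$ of $K$-abelian varieties; its induced map on $\ell$-adic Tate modules coincides with the algebraic direct summand of the cohomological isomorphism from~(a), hence is an isomorphism of $\rat_\ell$-vector spaces, and the underlying homomorphism of abelian varieties is therefore an isogeny.

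The hard part will be extracting degree-by-degree Galois isomorphisms in the middle cohomological degrees in part~(a): the off-diagonal components of $v(P)$ need not vanish, so neither the total-cohomology Mukai isomorphism nor the Betti-number equalities alone suffice to produce $H^i(X_{\bar K},\rat_\ell) \cong H^i(Y_{\bar K},\rat_\ell)$ as Galois modules. Overcoming this requires the Chow--K\"unneth decomposition for threefolds and a careful analysis of how $v(P)$ interacts with these projectors, which is the technical heart of the argument and the source of the perfect-field hypothesis in parts~(b) and~(c).
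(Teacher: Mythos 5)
Your starting point (Orlov's kernel, the Mukai vector as a $K$-rational correspondence acting on cohomology, Theorem \ref{T:ps} for $H^1$ and duality for $H^5$) is the same as the paper's, but the step you yourself identify as the technical heart does not work as proposed. First, the relation $v(\mathcal F)\circ v(\mathcal E)=\Delta_X$ does \emph{not} force the degree-preserving components to be isomorphisms: writing $v_{ji}\colon H^i(X)\to H^j(Y)$ for the graded pieces of $v(\mathcal E)$ and $w_{ij}$ for those of $v(\mathcal F)$, the relation only gives $\sum_j w_{ij}v_{ji}=\operatorname{id}_{H^i(X)}$, which exhibits $H^i(X)$ as a direct summand of a sum of the $H^j(Y)$ but says nothing about $v_{ii}$ alone. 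Second, the Chow--K\"unneth (or even just K\"unneth) projectors you want to sandwich with are not known to exist for arbitrary smooth projective threefolds; the transcendental part of $H^2$ is the obstruction. The paper avoids both problems by staying with the \emph{ungraded} even/odd isomorphisms \eqref{eq:b} and invoking the cancellation property: semisimplicity of polarizable $\rat$-Hodge structures, and the Krull--Schmidt--Azumaya theorem for finite-dimensional $\rat_\ell[\gal(K)]$-modules. Cancellation plus the known $H^0,H^1,H^5,H^6$ yields $H^3$; to then split $H^2$ from $H^4$ one needs the additional observation that Poincar\'e duality and hard Lefschetz make $H^2(X)(1)$ self-dual, so $H^2(X)(1)\oplus H^4(X)(2)\iso H^2(Y)(1)\oplus H^4(Y)(2)$ can be cancelled down to $H^2(X)\iso H^2(Y)$. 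Your part (b) is likewise not ``automatic'': the isomorphisms obtained by cancellation are abstract, not induced by a correspondence, and even a correspondence-induced isomorphism is a priori only compatible with coniveau, not strictly so. The paper first uses that the coniveau filtration is split for $K\subseteq\cx$ (so it suffices to match graded pieces), and then matches $\coniveau^1H^2$ and $\coniveau^2H^4$ using the refined invariant \eqref{eq:n} for $j=0$, the hyperplane-section isomorphism $\coniveau^1H^2(1)\iso\coniveau^2H^4(2)$, and cancellation once more.

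For part (\ref{algrepisog}) there are two further gaps. The algebraic representative is universal for regular homomorphisms out of $\operatorname{A}^i(X_{\bar K})$ induced by \emph{integral} cycles, so the Mukai vector, which has denominators at $2,3,5$, cannot act directly; the paper clears denominators by an integer $n$ supported at $30$ and checks that $n\gamma_*$ remains an isomorphism on $\operatorname{A}^*\otimes\mathbb Z[1/30]$. Moreover, with no degreewise projector available you cannot ``take the component of $v(P)$ in degree $2i-1$''; the paper instead lets the whole correspondence act on $\operatorname{A}^1\oplus\operatorname{A}^2\oplus\operatorname{A}^3$, uses the universal property and Galois-equivariant descent to produce a surjective $K$-homomorphism $\operatorname{Alb}(X)\times_K\operatorname{Ab}^2(X)\times_K\operatorname{Pic}^0(X)_{\rm red}\to\operatorname{Alb}(Y)\times_K\operatorname{Ab}^2(Y)\times_K\operatorname{Pic}^0(Y)_{\rm red}$ together with a surjection in the opposite direction, deduces an isogeny of the products, and then isolates $\operatorname{Ab}^2$ via Theorem \ref{T:ps}, duality for $\operatorname{Alb}$, and Poincar\'e reducibility. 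Your final claim that the induced map on Tate modules ``coincides with the algebraic summand of the isomorphism from (a)'' cannot be checked either, precisely because the isomorphisms from (a) are not realized by a correspondence.
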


\noindent Insofar as $\Ab^1(X) \iso \pic^0(X)_\red$, part (\ref{algrepisog})
is a natural extension of Theorem \ref{T:ps} in the special case of
threefolds.
In fact,  in   part (\ref{algrepisog})
the case where  $i=3$ also follows directly from Theorem \ref{T:ps}.  In the language of Mazur's phantom abelian varieties \cite{mazurprobICCM, ACMVdcg}, a consequence of Theorem \ref{T:main} is that derived equivalent smooth projective uniruled threefolds over a field $K\subseteq \mathbb C$ have isogenous phantom abelian varieties.  

To a smooth projective variety $X$ over a subfield of $\cx$ one may
associate the total image of the Abel--Jacobi map restricted to algebraically trivial cycles, $J_a(X_{\mathbb C})$, an abelian variety defined over $K$
(\S \ref{S:intjac}, \cite{ACMVdcg2}) that sits inside the total intermediate Jacobian.   We show (Proposition \ref{P:totaljac}) that if $X$ and $Y$ are derived equivalent smooth projective varieties of arbitrary
dimension over a subfield of $\cx$, then $J_a(X_{\mathbb C})$ and $J_a(Y_{\mathbb C})$ are isogenous over $K$\,; over $\mathbb C$ this provides a short proof of a special case of a result  of  \cite{BT16}, which also considers semi-orthogonal decompositions. 

We also direct the reader to some related work  in the arithmetic setting.
Honigs  recently proved that two
derived equivalent surfaces \cite{honigs15} or  threefolds
\cite{honigs3fold} over a finite field share the same
zeta function. 
In another direction, Antieau, Krashen and Ward
\cite{antieaukrashenward} give examples of varieties $X$ and $Y$ over
a field $K$ that are derived equivalent, but are nontrivial twists of each
other.   For instance, they describe distinct $K$-isomorphism classes
of genus one curves that are derived equivalent.   These pairs of
curves are twists of the same elliptic curve, and so have isomorphic Jacobians, 
in accordance with  Theorem \ref{T:ps}.

\subsection*{Notation}
Given a field $K$, we denote by $\bar K$ a separable closure
and by $\gal(K) = \gal(\bar K/K)$ the absolute Galois
group of $K$.
For a smooth projective variety $X$ over $K$, $\operatorname{CH}(X)$ denotes the Chow group of $X$, and $H^i(X)(j)$ denotes one of the following Weil cohomology theories\,:
\begin{itemize}
	\item For prime $\ell \neq \operatorname{char}(K)$, $H^i(X)(j) = H^i_{\text{\'et}}(X_{\bar K},\rat_\ell(j))$ viewed as a $\rat_\ell[\operatorname{Gal}(K)]$-module.
	\item For $K\subseteq \cx$, $H^i(X)(j) = H^i(X_{\cx},\rat(j))$ viewed as a pure rational Hodge structure.
\end{itemize}
The notation is intentionally ambiguous so that we may give statements and proofs for both cohomology theories  simultaneously\,; the meaning will be clear from the context.   
Maps between $\rat_\ell[\operatorname{Gal}(K)]$-modules are always
assumed to be morphisms. Likewise, maps between rational Hodge
structures are always assumed to be morphisms.

\subsection*{Acknowledgments}
We thank Barry Mazur for suggesting we investigate the resonance between
phantom abelian varieties \cite{ACMVdcg} and derived equivalence,
Katrina Honigs and Daniel Huybrechts for helpful conversations, and the referee for helpful suggestions.

\section{Derived equivalence and the coniveau filtration}\label{S:coniveau}

If $X$ and $Y$ are two derived equivalent smooth projective varieties
over a field $K$, then we have (ungraded) isomorphisms  
(e.g., \cite[Prop.~5.33]{huybrechtsFM}, \cite[\S 2]{lieblicholsson},  \cite[Lem.~3.1]{honigs15})
\begin{equation}\label{eq:b}
\bigoplus_i H^{2i}(X)(i) \simeq \bigoplus_i H^{2i}(Y)(i) \quad \text{and} \quad \bigoplus_i  H^{2i+1}(X)(i) \simeq\bigoplus_i H^{2i+1}(Y)(i).
\end{equation}

Let $\coniveau^\bullet$ denote the (geometric) coniveau filtration, i.e., 
$$ \coniveau^j H^i(X) 
:= \sum \ker(
H^i(X)\rightarrow
H^i(X
\backslash Z)),$$
where the sum runs through all closed $K$-subschemes $Z\subseteq
X$ of codimension $\geq j$. The isomorphisms \eqref{eq:b} can be upgraded to take into account the coniveau filtration\,:

\begin{pro}\label{P:even-odd}
Let $X$ and $Y$ be smooth projective varieties over a perfect field $K$. Assume that $X$ and $Y$ are derived equivalent. Then there exist for all integers $j$ (ungraded) isomorphisms
\begin{equation}\label{eq:n}
\bigoplus_i \coniveau^{i-j}H^{2i}(X)(i) \simeq \bigoplus_i \coniveau^{i-j}H^{2i}(Y)(i) \ \text{and} \ \bigoplus_i  \coniveau^{i-j}H^{2i+1}(X)(i) \simeq\bigoplus_i \coniveau^{i-j}H^{2i+1}(Y)(i).
\end{equation}
\end{pro}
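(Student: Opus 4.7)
The plan is to realize the ungraded isomorphisms \eqref{eq:b} as the action of an explicit algebraic correspondence, and then to analyze how each of its graded components interacts with the coniveau filtration. By Orlov's representability theorem, the given $K$-linear derived equivalence is isomorphic to a Fourier--Mukai functor $\Phi_P$ with kernel $P \in D^b(X \times_K Y)$, and under this identification the isomorphisms \eqref{eq:b} are induced by the Mukai-vector correspondence
$$v(P) \;:=\; \operatorname{ch}(P) \cdot p_X^* \sqrt{\operatorname{td}(X)} \cdot p_Y^* \sqrt{\operatorname{td}(Y)} \;\in\; \chow^*(X \times Y)_\rat.$$
Decompose $v(P) = \sum_s v_s$ by codimension, with $v_s \in \chow^{\dim X + s}(X \times Y)_\rat$; then $v_{s*}$ sends $H^{2(i-s)}(X)(i-s)$ into $H^{2i}(Y)(i)$, and likewise for odd degrees.

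The core technical input I would invoke is the standard fact that algebraic correspondences shift the geometric coniveau filtration by their relative codimension: for any $\Gamma \in \chow^{\dim X + s}(X \times Y)_\rat$ and any $k, m$,
$$\Gamma_* \, \coniveau^k H^m(X) \;\subseteq\; \coniveau^{k+s} H^{m+2s}(Y)(s).$$
Applied to $\Gamma = v_s$ with $k = (i-s) - j$, this gives $v_{s*}\bigl( \coniveau^{(i-s)-j} H^{2(i-s)}(X)(i-s) \bigr) \subseteq \coniveau^{i-j} H^{2i}(Y)(i)$. Summing over $s$ and $i$, the map $v(P)_*$ preserves the decreasing filtration $F^{-j} := \bigoplus_i \coniveau^{i-j} H^{2i}(X)(i)$ on the total even cohomology, and preserves the analogous filtration on the total odd cohomology. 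Since any quasi-inverse of $\Phi_P$ is again a Fourier--Mukai functor by Orlov, the same reasoning applied to its kernel yields a filtration-preserving map in the opposite direction, with the two compositions equal to the identity on cohomology. Therefore $v(P)_*$ is a strict filtered isomorphism, and restricting to each $F^{-j}$ yields the desired isomorphism in \eqref{eq:n}.

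The main obstacle I expect is verifying the shift property for $\coniveau^\bullet$ under arbitrary algebraic correspondences over a perfect, possibly positive-characteristic field $K$. Over $K \subseteq \cx$ (or in characteristic zero more generally) one has the customary description of $\coniveau^\bullet H^m(X)$ as the images of cohomology of smooth resolutions of closed subvarieties, from which the shift is a short support-and-push-forward computation. In positive characteristic one replaces resolutions by de Jong alterations (which is where perfectness of $K$ enters), whose degrees become invertible in $\rat$-coefficients; the support argument then carries through, and the same conclusion holds. Once this technical input is in place, the rest of the argument is essentially formal filtered-isomorphism bookkeeping.
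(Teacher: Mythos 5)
Your argument is correct and is essentially the paper's proof: both pass to a Fourier--Mukai kernel via Orlov's representability theorem, act on cohomology by the Mukai vector, and invoke the functoriality of the geometric coniveau filtration under algebraic correspondences --- which the paper simply cites (Arapura--Kang, D\'eglise) rather than sketching via supports and alterations as you do. The quasi-inverse kernel then supplies the two-sided inverse exactly as in the paper's proof.
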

\begin{proof}
	By Orlov \cite{orlov}, the equivalence $D^b(X) \simeq D^b(Y)$ is induced by a Fourier--Mukai functor. Denote by $\mathcal{E} \in D^b(X\times_K Y)$ its kernel, and  by $\mathcal{F} \in D^b(Y\times_K X)$ the kernel of its inverse.
	To $\mathcal{E}$, one can associate the Mukai vector $$v(\mathcal E) := p_X^*\sqrt{\text{td}_X} \cdot \operatorname{ch}(\mathcal{E})\cdot p_Y^*\sqrt{\text{td}_Y} \in \operatorname{CH}(X\times_K Y) \otimes \mathbb Q,$$ 
where $p_X : X\times_K Y \rightarrow X$ and $p_Y : X\times_K Y \rightarrow Y$ are the natural projections, and where $\text{td}_X$ and $\text{td}_Y$ are the Todd classes of $X$ and $Y$, respectively. 
A square root is taken formally.  
Likewise, we may consider the Mukai vector $ v(\mathcal F)$, and we have (see e.g., \cite[Prop.~5.10, Lem.~5.32]{huybrechtsFM}) 
\begin{align*}
v(\mathcal F)\circ v(\mathcal E) &= v(\mathcal O_{\Delta_X}) = \Delta_X \in \operatorname{CH}(X\times_K X) \otimes \mathbb Q,\\
v(\mathcal E)\circ v(\mathcal F)& = v(\mathcal O_{\Delta_Y}) = \Delta_Y \in \operatorname{CH}(Y\times_K Y) \otimes \mathbb Q.
\end{align*}
It is then apparent that the action of $v(\mathcal E)$ induces the isomorphisms in \eqref{eq:b}. Assuming that $K$ is perfect, the isomorphisms in \eqref{eq:n} follow readily from the functoriality of the coniveau filtration with respect to the action of correspondences \cite{arapura-kang, deglise}.
\end{proof}

\begin{rem}\label{R:GHC} Proposition \ref{P:even-odd} also holds if one replaces the geometric coniveau filtration with the Hodge coniveau filtration or the Tate coniveau filtration\,; see e.g. \cite[\S 1.2]{ACMVdcg} for definitions. As a consequence, given two derived equivalent smooth projective varieties $X$ and $Y$ over a field $K$, if $K=\cx$ and the Hodge conjecture (or the generalized Hodge conjecture) holds for $X$, or if $K$ is finitely generated over its prime subfield and the Tate conjecture (or the generalized Tate conjecture) holds for $X$, then the corresponding  conjecture  also   holds for $Y$.
\end{rem}

The conjecture of Orlov predicts that the isomorphisms \eqref{eq:n} can be chosen to be $i$-graded. We verify this prediction in the case where $X$ and $Y$ are derived equivalent threefolds. Note that it was already observed by Honigs \cite{honigs3fold} that the theorem of Popa--Schnell implies in this case that $H^3(X)\simeq H^3(Y)$ (it was previously observed in \cite{popaschnell} that $X$ and $Y$ have the same Betti 
numbers).

\begin{pro}\label{P:coniveau}
	Let $X$ and $Y$ be two smooth projective varieties of dimension $3$ over a field $K$. Assume that $X$ and $Y$ are derived equivalent. 
	Then for all $i$ there are isomorphisms 
	$H^i(X) \simeq H^i(Y)$, which  are compatible with the coniveau filtration for odd $i$ if $K$ is assumed to be perfect, and for all $i$ if $K\subseteq \mathbb C$.  
\end{pro}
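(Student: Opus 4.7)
My plan is to combine Theorem~\ref{T:ps}, Proposition~\ref{P:even-odd}, the hard Lefschetz theorem (valid both $\ell$-adically by Deligne and for rational Hodge structures, and $\operatorname{Gal}(K)$-equivariant or a morphism of Hodge structures when implemented by cup product with a $K$-rational ample class), and Krull--Schmidt cancellation in the relevant ambient category. Krull--Schmidt is available for finite-dimensional continuous $\rat_\ell[\operatorname{Gal}(K)]$-modules since these have finite length, and for $K \subseteq \cx$ the category of polarizable rational Hodge structures is in fact semisimple.

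First I would establish the pointwise isomorphisms $H^i(X) \simeq H^i(Y)$. Theorem~\ref{T:ps} gives $H^1(X) \simeq H^1(Y)$, and hard Lefschetz gives $H^5(Z)(2) \simeq H^1(Z)$ for either threefold $Z$, so $H^5(X) \simeq H^5(Y)$. The odd ungraded isomorphism of Proposition~\ref{P:even-odd} reads $H^1(X) \oplus H^3(X)(1) \oplus H^5(X)(2) \simeq H^1(Y) \oplus H^3(Y)(1) \oplus H^5(Y)(2)$; using hard Lefschetz to identify $H^5(Z)(2)$ with $H^1(Z)$ on both sides, it becomes $H^1(X)^{\oplus 2} \oplus H^3(X)(1) \simeq H^1(Y)^{\oplus 2} \oplus H^3(Y)(1)$, and Krull--Schmidt cancellation (using $H^1(X)\simeq H^1(Y)$) then yields $H^3(X) \simeq H^3(Y)$. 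The even case runs in parallel: $H^0$ and $H^6(-)(3)$ are trivial; hard Lefschetz identifies $H^4(-)(2) \simeq H^2(-)(1)$; the even ungraded isomorphism of Proposition~\ref{P:even-odd} then collapses to $H^2(X)(1)^{\oplus 2} \simeq H^2(Y)(1)^{\oplus 2}$ modulo trivial summands, whence $H^2(X) \simeq H^2(Y)$ and then $H^4(X) \simeq H^4(Y)$ via a further hard Lefschetz.

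For the coniveau compatibility in the odd case over perfect $K$: on $H^1$ the filtration is trivial, while on $H^5$ of a threefold the standard Albanese/Abel--Jacobi argument identifying $H^{2d-1}(Z)$ with $\coniveau^{d-1} H^{2d-1}(Z)$ gives $\coniveau^2 H^5 = H^5$, so compatibility is automatic in both cases. The only nontrivial step is $\coniveau^1 H^3 \subseteq H^3$ (with $\coniveau^2 H^3 = 0$ by dimension); applying Proposition~\ref{P:even-odd} with $j = 0$ to the odd sum and cancelling the trivial $H^1$, $H^5(2)$ summands as above yields $\coniveau^1 H^3(X)(1) \simeq \coniveau^1 H^3(Y)(1)$, which one promotes to an isomorphism of $H^3$'s respecting the filtration by applying Krull--Schmidt to the inclusion $\coniveau^1 H^3 \hookrightarrow H^3$. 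For $K \subseteq \cx$ the analogous argument covers all even $i$, exploiting the various values of $j$ in Proposition~\ref{P:even-odd} together with the full semisimplicity of polarizable rational Hodge structures.

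The main obstacle is arranging these Krull--Schmidt cancellations to be simultaneously compatible with the ambient module isomorphism and with each step of the coniveau subobject inclusion. This is most transparent in the semisimple setting — explaining why the coniveau conclusion is stronger for $K \subseteq \cx$ — and requires extra care in the purely Galois-theoretic setting, where one must track the cancellation at the level of the inclusion $\coniveau^1 H^3 \hookrightarrow H^3$ rather than on isolated modules.
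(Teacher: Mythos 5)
Your overall strategy---Theorem~\ref{T:ps} for $H^1$, Lefschetz-type dualities for $H^5$ and $H^4$, the ungraded isomorphisms of Proposition~\ref{P:even-odd} plus Krull--Schmidt/semisimple cancellation for $H^3$ and $H^2$, and the $j=0$ instance of Proposition~\ref{P:even-odd} for the coniveau pieces---is exactly the paper's, and your derivation of the unfiltered isomorphisms $H^i(X)\simeq H^i(Y)$ is correct (using hard Lefschetz to identify $H^4(-)(2)$ with $H^2(-)(1)$ before cancelling is, if anything, a cleaner justification of the even case than the paper's appeal to Poincar\'e duality). The gaps are in the coniveau-compatibility half, and there are two concrete ones.

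First, in the even case the cancellation you need is $\coniveau^1H^2(X)(1)\oplus\coniveau^2H^4(X)(2)\simeq \coniveau^1H^2(Y)(1)\oplus\coniveau^2H^4(Y)(2)$ implying $\coniveau^1H^2(X)(1)\simeq\coniveau^1H^2(Y)(1)$; this does not follow from Krull--Schmidt alone (compare $A\oplus B\simeq B\oplus A$), but requires knowing that $\coniveau^1H^2(-)(1)\simeq\coniveau^2H^4(-)(2)$ for each threefold separately. Hard Lefschetz on the ambient groups only gives that $\cup h$ carries $\coniveau^1H^2(1)$ injectively into $\coniveau^2H^4(2)$; surjectivity is a non-formal, coniveau-refined Lefschetz statement, which the paper extracts from the comparison isomorphisms over $K\subseteq\cx$ \cite[(1.3)]{ACMVdcg}---this is precisely why the even-case compatibility is only claimed for $K\subseteq\cx$. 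Your sketch omits this step. Second, your promotion of the separate isomorphisms $H^i(X)\simeq H^i(Y)$ and $\coniveau^jH^i(X)\simeq\coniveau^jH^i(Y)$ to a single filtration-preserving isomorphism is justified only where the ambient category is semisimple, i.e.\ for the Betti realization; the statement for $K\subseteq\cx$ also covers the $\ell$-adic realization, where $\rat_\ell[\gal(K)]$-modules are not semisimple. The paper's ingredient here, which you do not use, is that the geometric coniveau filtration is split for $K\subseteq\cx$ \cite[Cor.~4.4]{ACMVdcg2}, reducing filtered compatibility to isomorphisms of the individual pieces in both realizations. (Alternatively one can run every cancellation in the category of two-step filtered Galois modules, where Krull--Schmidt--Azumaya still applies and where $v(\mathcal E)_*$ is genuinely a filtered morphism; this is the ``extra care'' you flag for $\coniveau^1H^3\hookrightarrow H^3$, but it needs to be carried out rather than announced, and applied in the even case as well.)
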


\begin{proof} 
	Note that for a 3-fold $V$,  we have 
	$$0= \coniveau^1H^0(V) \subseteq \coniveau^0H^0(V) = H^0(V)$$
	$$0= \coniveau^1H^1(V)   \subseteq \coniveau^0H^1(V) = H^1(V)$$
	$$0=  \coniveau^2H^2(V)   \subseteq \coniveau^1H^2(V) \subseteq  \coniveau^0H^2(V) = H^2(V)$$
	$$0= \coniveau^2H^3(V)   \subseteq \coniveau^1H^3(V) \subseteq  \coniveau^0H^3(V) = H^3(V)$$
	$$0= \coniveau^3H^4(V)   \subseteq \coniveau^2H^4(V) \subseteq  \coniveau^1H^4(V) = H^4(V)$$
	$$0= \coniveau^3H^5(V)   \subseteq \coniveau^2H^5(V) = H^5(V)$$
	$$0= \coniveau^4H^6(V)   \subseteq \coniveau^3H^6(V) = H^6(V).$$
The equalities on the left follow from dimension considerations, while those on the right follow from the Lefschetz hyperplane theorem.

The category of polarizable rational Hodge structures is semisimple,
and in particular has the cancellation property; if there is an
isomorphism $A\oplus B \simeq A \oplus B'$ of such Hodge structures,
then $B\simeq B'$.  The category of finite-dimensional
$\rat_\ell$-representations of $\gal(K)$ (equivalently,
$\rat_\ell[\gal(K)]$-modules with finite-dimensional underlying
$\rat_\ell$-vector space) also has the cancellation
property.  Indeed, any such representation satisfies both the ascending and
descending chain conditions, and thus (e.g., \cite[Thm.~6.12,
Cor.~6.15]{curtisreiner}) satisfies the hypotheses of the
Krull--Schmidt--Azumaya theorem.

	That $H^0(X) \simeq H^0(Y)$ and $H^6(X)\simeq H^6(Y)$ is
        obvious.  Theorem \ref{T:ps} asserts that $H^1(X)\simeq
        H^1(Y)$ and duality then implies that $H^5(X)\simeq H^5(Y)$. By \eqref{eq:b} and cancellation, we obtain $H^3(X)\simeq H^3(Y)$. We also obtain from \eqref{eq:n} and from cancellation that $\coniveau^1H^3(X)\simeq \coniveau^1H^3(Y)$.

	By \eqref{eq:b}, and by
	cancellation, we obtain that $H^2(X)(1) \oplus H^4(X)(2) \simeq
	H^2(Y)(1) \oplus H^4(Y)(2)$.
	By Poincar\'e duality, and by
	semi-simplicity of the category of polarizable $\rat$-Hodge
	structures in the case where $H$ is Betti cohomology or by the Krull--Schmidt--Azumaya theorem  in the case where $H$ is $\ell$-adic cohomology, we obtain that $H^2(X)  \simeq
	H^2(Y)$, and then by duality that $H^4(X)  \simeq
	H^4(Y)$.
	
	We now assume  $K\subseteq \cx$.  	 Recall that in this case the coniveau filtration on $H^i(X)$ is split\,; see e.g., \cite[Cor.~4.4]{ACMVdcg2}. Therefore, in order to prove isomorphisms of $\rat_\ell[\gal(K)]$-modules or of rational Hodge structures that are compatible with the coniveau filtration, it suffices to prove that $\coniveau^jH^i(X) \simeq \coniveau^jH^i(Y)$ for all $i$ and $j$.
	It only remains to show that $\coniveau^1 H^2(X) \simeq \coniveau^1 H^2(Y)$ and $\coniveau^2 H^4(X) \simeq \coniveau^2 H^4(Y)$. 
 Using  Proposition \ref{P:even-odd} for $j=0$ we have that   
$$
\coniveau^0 H^0 \oplus \coniveau^1 H^2 (1)\oplus \coniveau ^2 H^4(2)
\oplus \coniveau ^3 H^6 (3) 
$$	
is a derived invariant.	
	Note that by the comparison isomorphisms (see e.g., \cite[(1.3)]{ACMVdcg}), intersecting with a smooth hyperplane section  defined over $K$ induces an isomorphism $\coniveau^1 H^{2}(1) \simeq \coniveau^2 H^{4}(2)$. 
 Now using  semi-simplicity or  the Krull--Schmidt theorem, we obtain the required isomorphisms for $\coniveau^1H^2$ and $\coniveau^2H^4$, which concludes the proof.
\end{proof}

\begin{rem}\label{R:coniveau}
The same arguments show that if $X$ and $Y$ are derived equivalent smooth projective varieties of dimension 4 over a field $K$, then there are isomorphisms $H^3(X) \simeq H^3(Y)$ and $H^5(X)\simeq H^5(Y)$. If in addition the monomorphism $\coniveau^1H^3(X) \hookrightarrow \coniveau^2H^5(X)$ induced by cupping with an ample divisor is surjective (e.g., if the generalized Hodge (or Tate) conjecture holds for $X$, or if the standard conjectures hold for $X$), then there are isomorphisms $\coniveau^1H^3(X) \simeq \coniveau^1H^3(Y)$ and $\coniveau^2H^5(X)\simeq \coniveau^2H^5(Y)$ compatible with the aforementioned isomorphisms.
\end{rem}

\section{Derived equivalence and total intermediate Jacobians}
\label{S:intjac}

If $K\subseteq \cx$, we see from the second isomorphism of \eqref{eq:b} that the isogeny class of the total intermediate Jacobian $$J(X_\cx) := \bigoplus_i J^{2i-1}(X_\cx)$$ is a derived invariant of smooth projective varieties. Here, $J^{2i-1}(X_\cx)$ is Griffiths' intermediate Jacobian\,; as a complex torus it is defined as 
\[
J^{2i-1}(X) := F^{i} H^{2i-1}(X,\cx) \backslash H^{2i-1}(X,\cx) /
H^{2i-1}(X,\integ),
\] 
where $F^\bullet$ denotes the Hodge filtration. The intermediate
Jacobian is of interest because it receives cohomologically trivial
cycles under the Abel--Jacobi map 
$
AJ:\operatorname{CH}^i(X_\cx)_{{\rm hom}} \to J^{2i-1}(X_\cx)
$.   In this paper we will be interested in the   Abel--Jacobi map  
\begin{equation}\label{E:AJmap}
\xymatrix{
AJ:\operatorname{A}^i(X_\cx) \ar[r] & J^{2i-1}(X_\cx)
}
\end{equation}
obtained by restricting to the subgroup of algebraically trivial cycles  $\A^i(X_\cx) \subseteq \operatorname{CH}^i(X_\cx)_{{\rm hom}}$.
We denote by 
$J^{2i-1}_a(X_\cx)$  the subtorus  that is the  image of $\operatorname{A}^i(X_{\mathbb C})$ under the  Abel--Jacobi map  (e.g., \cite[Lem.~1.6.2]{murre83}).
In terms of the coniveau filtration, we have 
\[
H^1(J_a^{2i-1}(X),\rat) \simeq \coniveau^i H^{2i-1}(X,\rat(i)).
\] 
A choice of polarization on $X$ endows $J^{2i-1}_a(X_\cx)$ with a polarization, thereby turning it into a complex abelian variety. 
Likewise, we see from the second isomorphism of \eqref{eq:n} that the isogeny class of the total image of the Abel--Jacobi map \eqref{E:AJmap},
 $$J_a(X_\cx) := \bigoplus_i J_a^{2i-1}(X_\cx),$$ is a derived invariant of smooth projective varieties.

We would like to upgrade this observation to an isogeny of abelian
varieties defined over $K$. Recall from \cite{ACMVdcg2} that $J^{2i-1}_a(X_\cx)$ descends canonically to
an abelian variety $J^{2i-1}_a(X)$ over $K$. Precisely, $J^{2i-1}_a(X)$ is the
abelian variety over $K$ such that the Abel--Jacobi map
$\operatorname{A}^i(X_\cx) \to J^{2i-1}_a(X)_\cx$ is
$\operatorname{Aut}(\cx/K)$-equivariant.

\begin{pro}\label{P:totaljac}
	Let $X$ and $Y$ be smooth projective varieties over a
        subfield $K$ of $\cx$. Assume that $X$ and $Y$ are derived equivalent. Then   $J_a(X) := \bigoplus_i J_a^{2i-1}(X)$ and $J_a(Y) := \bigoplus_i J_a^{2i-1}(Y)$ are isogenous abelian varieties over $K$. 
\end{pro}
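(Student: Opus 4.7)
The plan is to construct the required isogeny over $K$ in two steps: first produce an isogeny $J_a(X_\cx) \sim J_a(Y_\cx)$ of complex abelian varieties using the action of the Fourier--Mukai kernel on algebraically trivial cycles, and then descend this to $K$ via the universal property of $J_a^{2i-1}(X)$ established in \cite{ACMVdcg2}.

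By Orlov's theorem, the derived equivalence is induced by a Fourier--Mukai kernel $\cale \in D^b(X \times_K Y)$ with inverse kernel $\calf \in D^b(Y \times_K X)$. As recalled in the proof of Proposition \ref{P:even-odd}, the Mukai vectors $v(\cale) \in \chow(X \times_K Y) \otimes \rat$ and $v(\calf) \in \chow(Y \times_K X) \otimes \rat$ are $K$-rational correspondences satisfying $v(\calf) \circ v(\cale) = \Delta_X$ and $v(\cale) \circ v(\calf) = \Delta_Y$ in rational Chow. Since proper push-forward and flat pull-back preserve algebraic triviality, $v(\cale)_*$ and $v(\calf)_*$ are mutually inverse isomorphisms between the total algebraically trivial cycle groups $\bigoplus_i \A^i(X_\cx)$ and $\bigoplus_i \A^i(Y_\cx)$\,; note that since $v(\cale)$ is not of pure codimension, these maps mix codimensions.

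Via the Abel--Jacobi map \eqref{E:AJmap}, which is compatible with the action of correspondences --- checked component-wise, a cycle $v_k \in \chow^k(X \times_K Y) \otimes \rat$ induces a homomorphism $J_a^{2i-1}(X_\cx) \to J_a^{2(i+k-\dim X)-1}(Y_\cx)$ of complex abelian varieties --- these isomorphisms descend to mutually inverse isogenies between $J_a(X_\cx)$ and $J_a(Y_\cx)$. To descend this to $K$, I would invoke the property from \cite{ACMVdcg2} that each component Abel--Jacobi map $\A^i(X_\cx) \to J_a^{2i-1}(X)(\cx)$ is $\aut(\cx/K)$-equivariant. Since $v(\cale)$ is $K$-rational, its cycle-level action commutes with $\aut(\cx/K)$, so the composite $\bigoplus_i \A^i(X_\cx) \to J_a(Y)(\cx)$ is Galois-equivariant and hence factors through a morphism $J_a(X) \to J_a(Y)$ over $K$. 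Its base change to $\cx$ recovers the isogeny constructed above, so the descended map is itself an isogeny.

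The main technical point will be the compatibility of the Abel--Jacobi map with the inhomogeneous Mukai-vector correspondence; once the component-wise compatibility is recorded, everything else is formal, with the descent being an immediate consequence of the universal property in \cite{ACMVdcg2}.
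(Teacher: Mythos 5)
Your proposal is correct and follows essentially the same route as the paper: the paper's proof likewise applies the functoriality of the Abel--Jacobi map under correspondences to the Mukai vector $v(\cale)$ to get an isogeny $J_a(X_\cx)\to J_a(Y_\cx)$, and then descends it to $K$ by citing \cite[Prop.~5.1]{ACMVdcg2}. Your unwinding of the descent step via the $\aut(\cx/K)$-equivariance of the Abel--Jacobi maps is exactly the content of that cited proposition, so the two arguments coincide in substance.
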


\begin{proof}
	Recall that the Abel--Jacobi map is functorial with respect to the action of correspondences. 
	With the notations of the proof of Proposition \ref{P:even-odd}, the Mukai vector $v(\mathcal E)$ thus induces an isogeny $v(\mathcal E)_* : J_a(X_\cx) \to J_a(Y_\cx)$. Then it follows from \cite[Prop.~5.1]{ACMVdcg2} that this isogeny descends to an isogeny over $K$.
\end{proof}

As mentioned earlier, over $\mathbb C$ this provides a short proof of a special case of a result  of  \cite{BT16}, which also considers semi-orthogonal decompositions.

\vskip .2 cm 

For threefolds, as a corollary to Proposition \ref{P:totaljac} (Proposition \ref{P:coniveau} would suffice if $K=\cx$) and  Theorem \ref{T:ps},  we may single out the second intermediate Jacobian (we will give an alternate proof of this corollary in Theorem \ref{T:mainAb})\,:

\begin{cor}\label{C:algjac}
	Let $X$ and $Y$ be two smooth projective varieties of dimension $3$ over a field $K \subseteq \cx$. Assume that $X$ and $Y$ are derived equivalent. Then the intermediate Jacobians $J^3(X_\cx)$ and $J^3(Y_\cx)$ are isogenous complex tori, and    $J_a^3(X)$ and $J_a^3(Y)$ are $K$-isogenous  abelian varieties.\qed
\end{cor}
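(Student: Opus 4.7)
The plan is to combine the total-Jacobian isogeny of Proposition~\ref{P:totaljac} with Theorem~\ref{T:ps} and cancellation in the isogeny category of abelian varieties. For the assertion about complex tori, Proposition~\ref{P:coniveau} applied with $i=3$ supplies an isomorphism of rational Hodge structures $H^3(X_\cx,\rat) \simeq H^3(Y_\cx,\rat)$. Since the intermediate Jacobian construction $J^{2i-1}(-)$ is functorial for morphisms of rational Hodge structures of weight $2i-1$ (into the category of complex tori, up to isogeny at the level of rational coefficients), this immediately produces an isogeny $J^3(X_\cx) \sim J^3(Y_\cx)$.

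For the second assertion, I first record that for a smooth projective threefold $V$ over $K$ the total algebraic intermediate Jacobian appearing in Proposition~\ref{P:totaljac} decomposes as
\[
J_a(V) \;=\; J_a^1(V) \oplus J_a^3(V) \oplus J_a^5(V),
\]
with $K$-isomorphisms $J_a^1(V) = \pic^0(V)_\red$ and $J_a^5(V) = \alb(V)$; the latter holds because both abelian varieties serve as algebraic representatives for algebraically trivial $0$-cycles on $V$, an identification valid over $K$ thanks to the descent results of \cite{ACMVdcg2}. Since $\alb(V)$ is $K$-dual to $\pic^0(V)_\red$, Theorem~\ref{T:ps}, applied directly and after dualization (which preserves isogeny over $K$), yields $K$-isogenies $J_a^1(X) \sim J_a^1(Y)$ and $J_a^5(X) \sim J_a^5(Y)$.

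Proposition~\ref{P:totaljac} then provides a $K$-isogeny $J_a(X) \sim J_a(Y)$; combining with the two isogenies just obtained and invoking Poincar\'e complete reducibility, which makes the isogeny category of $K$-abelian varieties semisimple and hence admits cancellation, one concludes $J_a^3(X) \sim J_a^3(Y)$ over $K$. The main subtlety I anticipate is verifying the canonical identification $J_a^5(V) = \alb(V)$ at the level of $K$-abelian varieties (rather than merely over $\cx$), which is essential in order to legitimately dualize Theorem~\ref{T:ps}; once this identification is granted, the remaining steps are purely formal applications of cancellation in a semisimple category.
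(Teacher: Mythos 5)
Your proof is correct and follows essentially the same route the paper intends for this corollary (which it leaves as an exercise, citing Proposition~\ref{P:totaljac}, Proposition~\ref{P:coniveau}, and Theorem~\ref{T:ps}): isolate $J^3$ and $J_a^3$ from the total (algebraic) intermediate Jacobians by identifying the $i=1$ and $i=5$ pieces with $\pic^0(-)_\red$ and $\alb(-)$, handling those via Theorem~\ref{T:ps} and duality, and cancelling by Poincar\'e reducibility. The identification $J_a^5(V)=\alb(V)$ over $K$ that you flag is indeed the right point to check, and it holds by the descent results of \cite{ACMVdcg2}; the paper's alternate proof (Theorem~\ref{T:mainAb}) makes the same identifications explicitly.
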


\begin{rem}
	Following Remarks \ref{R:GHC} and \ref{R:coniveau}, we may in fact improve Corollary \ref{C:algjac}. Suppose that $X$ and $Y$ are two derived-equivalent smooth projective varieties of dimension $4$ over a field $K \subseteq \cx$\,; then the intermediate Jacobians $J^{2i-1}(X_\cx)$ and $J^{2i-1}(Y_\cx)$ are isogenous complex tori for all $i$. If in addition the monomorphism $\coniveau^1H^3(X) \hookrightarrow \coniveau^2H^5(X)$ induced by cupping with an ample divisor is surjective, then  
  $J_a^{2i-1}(X)$ and $J_a^{2i-1}(Y)$ are $K$-isogenous  abelian varieties for all $i$.
\end{rem}

\section{Derived equivalent 3-folds and algebraic representatives}\label{S:algrep}

The aim of this section is to extend Corollary \ref{C:algjac} to varieties defined over a perfect field $K$. The abelian variety that plays the role of the second   intermediate Jacobian over algebraically closed fields of positive characteristic is called an \emph{algebraic representative} (for codimension-$2$ cycles).

Let $X/K$ be a smooth,
projective variety, and consider the group $\A^i(X_\kbar)$ of algebraically
trivial codimension-$i$ cycles up to rational equivalence on $X_\kbar$.
The \emph{algebraic representative}, if it exists, is an abelian
variety $\Ab^i(X_\kbar)$ equipped with an Abel--Jacobi map
$\phi^i: \A^i(X_\kbar) \to \Ab^i(X_\kbar)(\kbar)$. Its defining universal property
is that $\phi^i$ is initial among all \emph{regular homomorphisms}
$\operatorname{A}^i(X_\kbar) \to A(\kbar)$, that is, among all homomorphisms
of groups $\psi : \operatorname{A}^i(X_\kbar) \to A(\kbar)$ to the $\kbar$-points
of an abelian variety $A$ over $\kbar$ such that for every pair
$((T,t_0),Z)$ with $(T,t_0)$ a pointed smooth integral variety over
$\kbar$, and $Z\in \operatorname{CH}^i(T\times X)$, the composition
$T(\kbar) \to \operatorname{A}^i(X_\kbar) \to A(\kbar), t \mapsto \psi(Z_t -
Z_{t_0}) $ is induced by a morphism of varieties $\psi_Z:T\to A$ over
$\kbar$.  The Abel--Jacobi map to the algebraic
representative is unique (up to a unique automorphism) and surjective.
In particular, $\Ab^1(X_\kbar) \iso \pic^0(X_\kbar)_{{\rm red}}$, while
$\Ab^{\dim X}(X_\kbar) \iso \alb(X_\kbar)$.  The existence of $\Ab^2(X_\kbar)$ was
established by Murre \cite{murre83}.

In \cite{ACMVdcg}, we extended Murre's theorem 
 by showing that an initial regular
homomorphism $$\phi_X^i : \operatorname{A}^i(X_\kbar) \to \operatorname{Ab}^i(X_\kbar)(\kbar)$$
can be made Galois-equivariant, thus providing a canonical descent
datum on $\operatorname{Ab}^i(X_\kbar)$ and hence, by descent,   a
distinguished model $\operatorname{Ab}^i(X)$ over $K$ of the abelian
variety $\operatorname{Ab}^i(X_\kbar)$.  The corresponding statements for
the Picard and Albanese varieties are well-known.\medskip

Our main result extends Theorem \ref{T:ps} to
$\operatorname{Ab}^2(X)$ when $\dim X \leq 3$ and when $K$ is a perfect field.

\begin{teo} \label{T:mainAb}
	Let $X$ and $Y$ be two smooth projective varieties of dimension $3$ over a perfect field $K$. Assume that $X$ and $Y$ are derived equivalent. Then the abelian varieties $\operatorname{Ab}^2(X)$ and $\operatorname{Ab}^2(Y)$ are $K$-isogenous.
\end{teo}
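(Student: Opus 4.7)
The plan is to extend Corollary~\ref{C:algjac} from $K\subseteq \cx$ to an arbitrary perfect field $K$, replacing intermediate Jacobians by algebraic representatives. The strategy is to assemble the graded pieces of the Mukai vectors of the Fourier--Mukai kernels into a single $K$-isogeny between the ``total'' algebraic representatives $\bigoplus_i \operatorname{Ab}^i(X)$ and $\bigoplus_i \operatorname{Ab}^i(Y)$, and then extract the degree-$2$ summand using Theorem~\ref{T:ps} and Krull--Schmidt cancellation.

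As in the proof of Proposition~\ref{P:even-odd}, I would write the derived equivalence via a Fourier--Mukai kernel $\mathcal{E}\in D^b(X\times_K Y)$ with inverse kernel $\mathcal{F}$, and form the Mukai vectors $v(\mathcal{E})\in \operatorname{CH}(X\times_K Y)\otimes \mathbb{Q}$ and $v(\mathcal{F})\in \operatorname{CH}(Y\times_K X)\otimes \mathbb{Q}$, which satisfy $v(\mathcal{F})\circ v(\mathcal{E}) = \Delta_X$ and $v(\mathcal{E})\circ v(\mathcal{F}) = \Delta_Y$. Since correspondences preserve algebraic equivalence, the codimension-$k$ graded piece $v(\mathcal{E})_k$ acts as $\operatorname{A}^i(X_{\bar K})\otimes \mathbb{Q} \to \operatorname{A}^{i+k-3}(Y_{\bar K})\otimes \mathbb{Q}$. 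Composing with $\phi^{\ell}_Y$ (where $\ell=i+k-3$) yields a regular homomorphism which, by the universal property of $\phi^i_X$, factors through a unique morphism of abelian varieties $\Phi^{i,\ell}_\mathcal{E}\colon \operatorname{Ab}^i(X)_{\bar K}\to \operatorname{Ab}^\ell(Y)_{\bar K}$. Because $\mathcal{E}$ is defined over $K$ and the $\operatorname{Ab}^\bullet$ carry canonical $K$-structures, the main result of \cite{ACMVdcg} ensures that $\Phi^{i,\ell}_\mathcal{E}$ is $\operatorname{Gal}(K)$-equivariant and thus descends to $K$. Assembling these pieces yields $\Phi_\mathcal{E}\colon \bigoplus_i \operatorname{Ab}^i(X)\to \bigoplus_\ell \operatorname{Ab}^\ell(Y)$ over $K$, and analogously $\Psi_\mathcal{F}\colon \bigoplus_\ell \operatorname{Ab}^\ell(Y)\to \bigoplus_i \operatorname{Ab}^i(X)$ over $K$.

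Next, I would compute $\Psi_\mathcal{F}\circ \Phi_\mathcal{E}$ componentwise. The uniqueness clause in the universal property implies that the construction of the induced morphism is functorial with respect to composition of correspondences, so the $(i,i')$-entry of $\Psi_\mathcal{F}\circ \Phi_\mathcal{E}$ is the morphism of abelian varieties induced by the codimension-$(i'-i+3)$ component of $v(\mathcal{F})\circ v(\mathcal{E})= \Delta_X$ acting on $\operatorname{A}^i(X_{\bar K})\to \operatorname{A}^{i'}(X_{\bar K})$. Since $\dim X = 3$, the diagonal $\Delta_X$ is purely of codimension $3$ and acts as the identity on each $\operatorname{A}^i(X)$; hence this entry equals $\operatorname{id}_{\operatorname{Ab}^i(X)}$ when $i=i'$ and vanishes otherwise. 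Thus $\Psi_\mathcal{F}\circ \Phi_\mathcal{E}$, and symmetrically $\Phi_\mathcal{E}\circ \Psi_\mathcal{F}$, are the identity in the isogeny category, so $\Phi_\mathcal{E}$ realises a $K$-isogeny $\bigoplus_i \operatorname{Ab}^i(X)\sim_K \bigoplus_i \operatorname{Ab}^i(Y)$.

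Finally, Theorem~\ref{T:ps} gives $\operatorname{Ab}^1(X) = \operatorname{Pic}^0(X)_{\mathrm{red}}\sim_K \operatorname{Ab}^1(Y)$, and dualizing (the Albanese being dual to $\operatorname{Pic}^0_{\mathrm{red}}$) yields $\operatorname{Ab}^3(X) = \operatorname{Alb}(X)\sim_K \operatorname{Ab}^3(Y)$. The category of abelian varieties over $K$ up to isogeny is semisimple by Poincar\'e reducibility, so Krull--Schmidt cancellation, applied to the isogeny of total algebraic representatives established above, yields $\operatorname{Ab}^2(X)\sim_K \operatorname{Ab}^2(Y)$. The principal technical obstacle in this plan is the descent step in the second paragraph: ensuring that the morphism $\Phi^{i,\ell}_\mathcal{E}$ between algebraic representatives, constructed a priori over $\bar K$ via the universal property, is $\operatorname{Gal}(K)$-equivariant and therefore defined over $K$. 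This is exactly the reason one must invoke \cite{ACMVdcg}, whose main theorem provides the canonical $K$-model of $\operatorname{Ab}^i(X)$ and the functoriality of this model under the action of $K$-correspondences.
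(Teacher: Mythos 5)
Your overall strategy is the same as the paper's: push the Mukai vector of the Fourier--Mukai kernel through the universal property of the algebraic representatives, descend to $K$ via \cite{ACMVdcg}, obtain a $K$-isogeny of the total algebraic representatives, and then peel off $\operatorname{Ab}^2$ using Theorem \ref{T:ps}, duality, and Poincar\'e reducibility. Your way of seeing that the total map is an isogeny (functoriality of the induced morphisms under composition of correspondences, so that $\Psi_{\mathcal F}\circ\Phi_{\mathcal E}$ is induced by $\Delta_X$ and is the identity) is a clean variant of the paper's argument, which instead shows that $\Phi$ and $\Psi$ are each surjective and concludes from the two surjections.

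There is, however, one genuine gap in the second paragraph. The Mukai vector $v(\mathcal E)$ lives in $\operatorname{CH}(X\times_K Y)\otimes\mathbb Q$, so its graded pieces only act on $\operatorname{A}^i(X_{\bar K})\otimes\mathbb Q$, and you cannot compose such a map with $\phi^{\ell}_Y$: the Abel--Jacobi map $\phi^{\ell}_Y$ is defined on the integral group $\operatorname{A}^{\ell}(Y_{\bar K})$, its target $\operatorname{Ab}^{\ell}(Y)(\bar K)$ has torsion, and the notion of regular homomorphism (hence the universal property you invoke) applies only to homomorphisms out of $\operatorname{A}^{\ell}(Y_{\bar K})$ induced by \emph{integral} cycles on $T\times Y$. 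As written, the phrase ``yields a regular homomorphism'' does not parse. The fix is exactly the point the paper is careful about: the denominators of $v(\mathcal E)$ for threefolds involve only the primes $2,3,5$, so one chooses $n$ supported on $\{2,3,5\}$ with $n\gamma$ an integral correspondence, applies the universal property to $n\gamma_*$, and notes that $n\gamma_*$ is still an isomorphism after inverting $30$ (equivalently, your composite computation returns $n_{\mathcal E}n_{\mathcal F}\cdot\operatorname{id}$ rather than $\operatorname{id}$, which is still an isogeny). With that repair your argument goes through and matches the paper's proof.
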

\begin{proof}
	By Orlov \cite{orlov}, the equivalence $D^b(X) \simeq D^b(Y)$ is induced by a Fourier--Mukai functor. Denote $\mathcal{E} \in D^b(X\times_K Y)$ its kernel.
	As above, to $\mathcal{E}$  one can associate the Mukai vector $$\gamma = v(\mathcal E) := p_X^*\sqrt{\text{td}_X} \cdot \operatorname{ch}(\mathcal{E})\cdot p_Y^*\sqrt{\text{td}_Y} \in \operatorname{CH}^*(X\times_K Y) \otimes \mathbb Z [\frac{1}{2}, \frac{1}{3}, \frac 15].$$ 
	(Here, $2,3$ and $5$ are the only prime numbers appearing as factors of denominators of the Chern character and of the square root of Todd classes.)
Choose a positive integer
	$n$,
	whose only prime divisors are $2$, $3$ and $5$,
	such that $n\gamma$ is an integral cycle\,;
	the correspondence $n\gamma \in\operatorname{CH}(X\times_K Y)$ induces a homomorphism  $$n \gamma_* :  \operatorname{A}^*(X_{\kbar}) \longrightarrow
	\operatorname{A}^*(Y_{\kbar})$$
	which, by Riemann--Roch, becomes an isomorphism upon tensoring with $\mathbb Z[1/30]$.
	
	 Since $n\gamma_*$ is induced by an integral cycle,	 the homomorphism $(\phi^3_Y\oplus \phi^2_Y\oplus \phi^1_Y) \circ n\gamma_*$ is a regular homomorphism.  
	Thus, for $X$ and $Y$ as in the statement of the theorem and
        by the universal property of the algebraic representatives, we
        have a diagram
	\begin{equation}\label{E:diag}
	\xymatrix@C=5em@R=.3em{
		\operatorname{A}^*(X_\kbar) \ar@{->>}[r]^<>(0.5){\phi^3_X\oplus \phi^2_X\oplus \phi^1_X} \ar[ddd]_{n\gamma_*} & \operatorname{Alb}(X)(\kbar) \oplus \operatorname{Ab}^2(X)(\kbar) \oplus \operatorname{Pic}^0(X)_\red(\kbar) \ar[ddd]_\varphi \\
		\\
		\\
		\operatorname{A}^*(Y_\kbar) \ar@{->>}[r]^<>(0.5){\phi^3_Y\oplus \phi^2_Y\oplus \phi^1_Y}& \operatorname{Alb}(Y)(\kbar) \oplus \operatorname{Ab}^2(Y)(\kbar) \oplus \operatorname{Pic}^0(Y)_\red(\kbar)
	}
	\end{equation}
	where $\varphi$ is induced by a $\kbar$-homomorphism $$\operatorname{Alb}(X)_\kbar \times \operatorname{Ab}^2(X)_\kbar \times \operatorname{Pic}^0(X)_{\red,\kbar} \to \operatorname{Alb}(Y)_\kbar \times \operatorname{Ab}^2(Y)_\kbar \times \operatorname{Pic}^0(Y)_{\red,\kbar}.$$
	In fact, since the integral correspondence $n\gamma$ is defined over $K$,  it follows from \cite[Thm.~4.4]{ACMVdcg} that this $\kbar$-homomorphism descends to a $K$-homomorphism 
	$$\Phi : \operatorname{Alb}(X) \times_K \operatorname{Ab}^2(X) \times_K \operatorname{Pic}^0(X)_\red \to \operatorname{Alb}(Y) \times_K \operatorname{Ab}^2(Y) \times_K \operatorname{Pic}^0(Y)_\red.$$
	Together with the fact that $n\gamma_*$ is an isomorphism after inverting $30$,  a simple diagram chase yields that $\varphi$ is surjective after inverting $30$ and hence that $\Phi$ is surjective.

	Likewise, by considering the inverse equivalence, we see that there is a  $K$-homomorphism 
	$$\Psi : \operatorname{Alb}(Y) \times_K \operatorname{Ab}^2(Y) \times_K \operatorname{Pic}^0(Y)_\red \to \operatorname{Alb}(X) \times_K \operatorname{Ab}^2(X) \times_K \operatorname{Pic}^0(X)_\red$$
	which is surjective.
	Therefore
	the abelian varieties $\operatorname{Alb}(X) \times_K \operatorname{Ab}^2(X) \times_K \operatorname{Pic}^0(X)_\red$ and $\operatorname{Alb}(Y) \times_K \operatorname{Ab}^2(Y) \times_K \operatorname{Pic}^0(Y)_\red$ are $K$-isogenous.
	
	By Theorem \ref{T:ps}, if
	$X$ and $Y$ are two derived equivalent smooth projective varieties
	over a field $K$, then the abelian varieties $\operatorname{Pic}^0(X)_\red$
	and $\operatorname{Pic}^0(Y)_\red$ are $K$-isogenous\,; by duality, the
	abelian varieties $\operatorname{Alb}(X)$ and $\operatorname{Alb}(Y)$
	are $K$-isogenous. 
	Therefore, by Poincar\'e reducibility,
	 we find that $\operatorname{Ab}^2(X) $ and 
	 $\operatorname{Ab}^2(Y)$ are $K$-isogenous.
\end{proof}

\bibliographystyle{hamsalpha}
\bibliography{DCG}

\def\cprime{$'$}
\providecommand{\bysame}{\leavevmode\hbox to3em{\hrulefill}\thinspace}
\providecommand{\MR}{\relax\ifhmode\unskip\space\fi MR }
\providecommand{\MRhref}[2]{%
  \href{http://www.ams.org/mathscinet-getitem?mr=#1}{#2}
}
\providecommand{\href}[2]{#2}
\begin{thebibliography}{ACMV17}

\bibitem[ACHV18]{honigs3foldA}
J.~Achter, S.~{Casalaina-Martin}, K.~Honigs, and Ch. Vial, \emph{Appendix to:
  {D}erived equivalence, {A}lbanese varieties, and the zeta functions of
  3-dimensional varieties, by {K}. {H}onigs}, Proc. Amer. Math. Soc.
  \textbf{146} (2018), no.~3, 1005--1013. \MR{3750214}

\bibitem[ACMV17]{ACMVdcg}
Jeffrey~D. Achter, Sebastian Casalaina-Martin, and Charles Vial, \emph{On
  descending cohomology geometrically}, Compositio Mathematica \textbf{153}
  (2017), no.~7, 1446--1478.

\bibitem[ACV16]{ACMVdcg2}
J.~Achter, S.~{Casalaina-Martin}, and Ch. Vial, \emph{{Distinguished models of
  intermediate Jacobians}}, ArXiv e-prints (2016), \mbox{1611.07471}.

\bibitem[AK07]{arapura-kang}
Donu Arapura and Su-Jeong Kang, \emph{Functoriality of the coniveau
  filtration}, Canad. Math. Bull. \textbf{50} (2007), no.~2, 161--171.
  \MR{2317438}

\bibitem[AKW17]{antieaukrashenward}
Benjamin Antieau, Daniel Krashen, and Matthew Ward, \emph{Derived categories of
  torsors for abelian schemes}, Adv. Math. \textbf{306} (2017), 1--23.
  \MR{3581296}

\bibitem[BO01]{bondalorlov01}
Alexei Bondal and Dmitri Orlov, \emph{Reconstruction of a variety from the
  derived category and groups of autoequivalences}, Compositio Math.
  \textbf{125} (2001), no.~3, 327--344. \MR{1818984}

\bibitem[BT16]{BT16}
Marcello Bernardara and Gon\c{c}alo Tabuada, \emph{From semi-orthogonal
  decompositions to polarized intermediate {J}acobians via {J}acobians of
  noncommutative motives}, Mosc. Math. J. \textbf{16} (2016), no.~2, 205--235.
  \MR{3480702}

\bibitem[CR90]{curtisreiner}
Charles~W. Curtis and Irving Reiner, \emph{Methods of representation theory.
  {V}ol. {I}}, Wiley Classics Library, John Wiley \& Sons, Inc., New York,
  1990, With applications to finite groups and orders, Reprint of the 1981
  original, A Wiley-Interscience Publication. \MR{1038525}

\bibitem[D\'14]{deglise}
Fr\'ed\'eric D\'eglise, \emph{Suite spectrale du coniveau et {$t$}-structure
  homotopique}, Ann. Fac. Sci. Toulouse Math. (6) \textbf{23} (2014), no.~3,
  591--609. \MR{3266706}

\bibitem[Hon15]{honigs15}
Katrina Honigs, \emph{Derived equivalent surfaces and abelian varieties, and
  their zeta functions}, Proc. Amer. Math. Soc. \textbf{143} (2015), no.~10,
  4161--4166. \MR{3373916}

\bibitem[Hon18]{honigs3fold}
\bysame, \emph{{Derived equivalence, {A}lbanese varieties, and the zeta
  functions of 3-dimensional varieties}}, Proc. Amer. Math. Soc. \textbf{146}
  (2018), no.~3, 1005--1013. \MR{3750214}

\bibitem[Huy]{huybrechts17}
D.~Huybrechts, \emph{{Motives of derived equivalent K3 surfaces}}, Abh. Math.
  Semin. Univ. Hambg., to appear.

\bibitem[Huy06]{huybrechtsFM}
\bysame, \emph{Fourier-{M}ukai transforms in algebraic geometry}, Oxford
  Mathematical Monographs, The Clarendon Press, Oxford University Press,
  Oxford, 2006. \MR{2244106}

\bibitem[LO15]{lieblicholsson}
Max Lieblich and Martin Olsson, \emph{Fourier-{M}ukai partners of {K}3 surfaces
  in positive characteristic}, Ann. Sci. \'Ec. Norm. Sup\'er. (4) \textbf{48}
  (2015), no.~5, 1001--1033. \MR{3429474}

\bibitem[Maz14]{mazurprobICCM}
B.~Mazur, \emph{Open problems: {D}escending cohomology, geometrically}, Notices
  of the International Congress of Chinese Mathematicians \textbf{2} (2014),
  no.~1, 37 -- 40.

\bibitem[Mur85]{murre83}
J.~P. Murre, \emph{Applications of algebraic {$K$}-theory to the theory of
  algebraic cycles}, Algebraic geometry, {S}itges ({B}arcelona), 1983, Lecture
  Notes in Math., vol. 1124, Springer, Berlin, 1985, pp.~216--261. \MR{805336
  (87a:14006)}

\bibitem[Orl03]{orlov}
D.~O. Orlov, \emph{Derived categories of coherent sheaves and equivalences
  between them}, Uspekhi Mat. Nauk \textbf{58} (2003), no.~3(351), 89--172.
  \MR{1998775}

\bibitem[Orl05]{orlovmotives}
\bysame, \emph{Derived categories of coherent sheaves, and motives}, Uspekhi
  Mat. Nauk \textbf{60} (2005), no.~6(366), 231--232. \MR{2225203}

\bibitem[PS11]{popaschnell}
Mihnea Popa and Christian Schnell, \emph{Derived invariance of the number of
  holomorphic 1-forms and vector fields}, Ann. Sci. \'Ec. Norm. Sup\'er. (4)
  \textbf{44} (2011), no.~3, 527--536. \MR{2839458}

\bibitem[Rou11]{rouquier11}
Rapha{\"e}l Rouquier, \emph{Automorphismes, graduations et cat\'egories
  triangul\'ees}, J. Inst. Math. Jussieu \textbf{10} (2011), no.~3, 713--751.
  \MR{2806466}

\end{thebibliography}

\end{document}